\title{A further generalization of the Glauberman-Thompson $p$-nilpotency criterion in fusion systems}
\author{Zhencai Shen, Baoyu Zhang\thanks{Corresponding author.
\newline \hspace*{0.62cm}{\it Email addresses}: zhencai688@sina.com (Z. Shen); baoyuzhang.math@outlook.com (B. Zhang).}\\
\quad
\\
{\footnotesize College of Science,
China Agricultural University,
Beijing 100083, China}}
\date{}
\newtheorem{theorem}{Theorem}[section]
\newtheorem{theoremAlph}{Theorem}
\newtheorem{lemma}[theorem]{Lemma}
\newtheorem{corollaryAlph}[theoremAlph]{Corollary}
\theoremstyle{definition}
\newtheorem{remark}[theorem]{Remark}
\let\expandafter\oldproof\csname\string\proof\endcsname
\let\oldendproof\endproof
\renewenvironment{proof}[1][\proofname]{%
  \oldproof[\bfseries\scshape #1]%
}{\oldendproof}
\def\trianglelefteqslant{\ThisStyle{\mathrel{%
  \stackinset{r}{.75pt+.15\LMpt}{t}{.1\LMpt}{\rule{.3pt}{1.1\LMex+.2ex}}{\SavedStyle\leqslant}%
}}}
\renewcommand{\unlhd}{\trianglelefteqslant}
\renewcommand{\le}{\leqslant}
\renewcommand{\leq}{\leqslant}
\begin{document}
\maketitle
\begin{abstract}
Let $p$ be a prime and $\mathcal{F}$ be a saturated fusion system over a finite $p$-group $P$. The fusion system $\mathcal{F}$ is said to be nilpotent if $\mathcal{F}=\mathcal{F}_{P}(P)$. We provide new criteria for a saturated fusion system $\mathcal{F}$ to be nilpotent, which may be viewed as extending the Glauberman-Thompson $p$-nilpotency criterion to fusion systems.

\medskip

\noindent{\bf Keywords:}  Fusion System;
Nilpotent Fusion System.\\
 \noindent{\bf MSC:} 20D10 20D20
\end{abstract}
\section{Introduction}
Throughout this paper, all groups are finite and $\mathcal{F}$ always denotes a saturated fusion system over a finite $p$-group $P$.
 We shall adhere to the notation and terminologies used in \cite{AKO, BLO, P}.

It is always worth noting that Thompson's normal $p$-complement theorems vitalized finite group
theory in the second half of the last century and opened up significant perspectives which are still powerful and active nowadays.
 Let $p$ be an odd prime. The $p$-nilpotency theorem of Glauberman and Thompson asserts that a finite group $G$ is $p$-nilpotent 
 if and only if $N_{G}(Z(J(P)))$ is $p$-nilpotent. In \cite{KL}, R. Kessar and M. Linckelmann generalize the Glauberman-Thompson $p$-nilpotency theorem and Glauberman's $ZJ$-theorem to fusion systems. Following in the 
 spirit of \cite{KL}, many definitions and results have been generalized to fusion systems, which concentrates on how the impact of certain structures on finite groups translates into influence on the global structure of fusion systems.

Let $K\leq H$ be subgroups of a finite group $G$. The subgroup $K$ is called {\it strongly closed} in $H$ with respect to $G$ if $K^G\cap H \leq K$. Strong closure is one of the essential ingredients of finite group theory. Goldschmidt's theorem on strongly closed abelian $2$-subgroups \cite{Gol} played a fundamental role in the classification of finite simple groups. This concept also facilitates the development of other aspects of algebra.
In a fusion system $\mathcal {F}$ over a $p$-group $P$, a subgroup $Q$ of $P$ is said to
 be {\it $\mathcal{F}$-strongly closed} in $P$ if no element of $Q$ is $\mathcal {F}$-conjugate to an element in $P\setminus Q$. In this paper, new nilpotency criteria for fusion systems are derived with $\mathcal{F}$-strongly closed subgroups.
 
The following theorem is a recent generalization of the Glauberman-Thompson $p$-nilpotency theorem by K\i zmaz.
 
\begin{lemma}[{{\cite{KI}}}]\label{1.1}
Let $G$ be a finite group and $P$ a Sylow $p$-subgroup of $G$, where $p$ is an odd prime. Assume that $D$ is a strongly closed subgroup in $P$. Then $G$ is a $p$-nilpotent group if and only if $N_{G}(Z(J(D)))$ is a $p$-nilpotent group.
\end{lemma}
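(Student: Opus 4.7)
The forward direction is immediate, since any subgroup of a $p$-nilpotent group is $p$-nilpotent; so I would concentrate on the reverse direction, assuming $H := N_{G}(Z(J(D)))$ is $p$-nilpotent and aiming to show $G$ is $p$-nilpotent.

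The first step is to exploit strong closure to put $D$ and its characteristic subgroups in normal position. For any $y\in P$ and $x\in D$ we have $x^{y}\in D^{G}\cap P\subseteq D$, so $D\unlhd P$. Consequently the characteristic subgroup $Z(J(D))$ of $D$ is normal in $P$, giving $P\leq H$. Thus $H$ is a $p$-nilpotent subgroup of $G$ containing a Sylow $p$-subgroup, and its normal $p$-complement $O_{p'}(H)$ will be the starting point for building a normal $p$-complement of $G$.

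I would then argue by a minimal counterexample $G$, with the goal of reducing the statement to the classical Glauberman--Thompson criterion applied to $P$ (i.e., showing $N_{G}(Z(J(P)))$ is $p$-nilpotent). Standard reductions should yield $O_{p'}(G)=1$: if $N=O_{p'}(G)\ne 1$, then the image $\overline{D}$ of $D$ in $\overline{G}=G/N$ is strongly closed in $\overline{P}$ with respect to $\overline{G}$, and because $|N|$ is coprime to $p$ one has $Z(J(\overline{D}))=\overline{Z(J(D))}$ and $N_{\overline{G}}(Z(J(\overline{D})))=\overline{H}$, still $p$-nilpotent, so minimality would force $\overline{G}$ and hence $G$ to be $p$-nilpotent. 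Further reductions (such as replacing $G$ by $N_{G}(D)$ when it is proper, where the hypotheses are obviously inherited since strong closure with respect to $G$ restricts to strong closure with respect to any overgroup of $P$) should shrink the setup until $D$ is normal in $G$ or $D=P$, the latter being exactly the hypothesis of the classical theorem.

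The central, and I expect hardest, step is to bridge the gap between $Z(J(D))$ and $Z(J(P))$. In general $J(D)$ is not $J(P)\cap D$, and $Z(J(P))$ need not lie inside $D$, so the $p$-nilpotency of $N_{G}(Z(J(D)))$ does not obviously transfer to $N_{G}(Z(J(P)))$. The natural tools are Glauberman's replacement theorem, available since $p$ is odd, which allows one to replace an abelian subgroup of $P$ of maximal order by another that respects the normal subgroup $D$, and Thompson's factorization inside the reduced setting $F^{*}(G)=O_{p}(G)$ forced by $O_{p'}(G)=1$. Combining these with an induction on $|P:D|$ along the chain $D\leq N_{P}(D)\leq \cdots \leq P$ should propagate the $p$-nilpotency of $N_{G}(Z(J(D)))$ to $N_{G}(Z(J(P)))$, and the classical Glauberman--Thompson theorem then produces the desired contradiction.
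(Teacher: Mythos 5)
First, note that the paper offers no proof of this lemma: it is quoted verbatim from \cite{KI}, so the only fair comparison is with K\i zmaz's argument there. Your preliminary steps are correct --- $D\unlhd P$ by strong closure, hence $Z(J(D))\unlhd P$ and $P\le H=N_G(Z(J(D)))$, and the reduction of a minimal counterexample to $O_{p'}(G)=1$ via the Frattini argument is sound. But the proposal then breaks down in two places. The reduction ``replace $G$ by $N_G(D)$ when it is proper'' is circular: minimality tells you that the proper subgroup $N_G(D)$ is $p$-nilpotent, but you give no mechanism for transferring $p$-nilpotency from a proper subgroup back to $G$ (that transfer is essentially the theorem itself), so this step does not ``shrink the setup'' at all. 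More seriously, the step you yourself identify as central --- propagating $p$-nilpotency from $N_G(Z(J(D)))$ to $N_G(Z(J(P)))$ so as to invoke the classical Glauberman--Thompson criterion --- is only asserted to ``should'' follow from the replacement theorem and an induction on $|P:D|$. There is no containment in either direction between these two normalizers, and I see no reason such a bridge exists; as written, the entire content of the theorem is concentrated in this unproved sentence.

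The actual proof does not route through $Z(J(P))$ at all. Following the Glauberman--Thompson/Kessar--Linckelmann template, one shows that a minimal counterexample (after the Frobenius-type reductions: every proper subgroup containing $P$ is $p$-nilpotent, $O_{p'}(G)=1$, $O_p(G)$ is self-centralizing, and $G$ is $p$-stable because $p$ is odd) satisfies $Z(J(D))\unlhd G$; this is the strongly closed analogue of Glauberman's $ZJ$-theorem, proved in \cite{KI} by applying Glauberman's replacement theorem to abelian subgroups of $D$ acting on $O_p(G)$, with strong closure used to keep the relevant subgroups inside $D$. Then $G=N_G(Z(J(D)))$ is $p$-nilpotent by hypothesis, the desired contradiction. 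If you want to complete your sketch, the lemma you must prove is that $Z(J(D))$ is normal in a $p$-stable, $p$-constrained group when $D$ is strongly closed, not a comparison between $Z(J(D))$ and $Z(J(P))$.
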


We extend the result of K\i zmaz to fusion systems and give the following theorem.

\begin{theoremAlph}\label{A}
Let $\mathcal {F}$ be a saturated fusion system over a finite $p$-group $P$ for an odd prime $p$, and $D$ be an $\mathcal{F}$-strongly closed subgroup. Then $\mathcal {F}=\mathcal {F}_{P}(P)$
if and only if $N_{\mathcal {F}}(Z(J(D)))=\mathcal {F}_{P}(P)$.
\end{theoremAlph}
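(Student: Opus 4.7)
The forward implication is immediate. Since $D$ is $\mathcal{F}$-strongly closed, $D \trianglelefteq P$, and hence the characteristic subgroup $Z(J(D))$ is also normal in $P$. Every morphism in $\mathcal{F}_P(P)$ is conjugation by an element of $P$ and therefore preserves $Z(J(D))$, giving $N_{\mathcal{F}_P(P)}(Z(J(D))) = \mathcal{F}_P(P)$.

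For the converse I proceed by induction on $|P|$. Let $\mathcal{F}$ be a minimal counterexample: $|P|$ is minimal subject to $N_{\mathcal{F}}(Z(J(D))) = \mathcal{F}_P(P)$ and $\mathcal{F} \ne \mathcal{F}_P(P)$. Writing $Z := Z(J(D))$, my goal is to prove $Z \trianglelefteq \mathcal{F}$, whence $\mathcal{F} = N_{\mathcal{F}}(Z) = \mathcal{F}_P(P)$ produces the desired contradiction.

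By Alperin's fusion theorem for saturated fusion systems, normality of $Z$ in $\mathcal{F}$ reduces to verifying, for every fully normalized $\mathcal{F}$-essential subgroup $E$ (and for $E = P$), that $Z \leq E$ and that $\mathrm{Aut}_{\mathcal{F}}(E)$ stabilizes $Z$. The case $E = P$ is handled directly by strong closure: any $\alpha \in \mathrm{Aut}_{\mathcal{F}}(P)$ satisfies $\alpha(D) \leq D$ by $\mathcal{F}$-strong closure, hence $\alpha(D) = D$ by injectivity, so $\alpha|_D \in \mathrm{Aut}_{\mathcal{F}}(D)$ and $\alpha$ preserves the characteristic subgroup $Z$.

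For a fully normalized essential subgroup $E \ne P$, the constrained subsystem $N_{\mathcal{F}}(E)$ admits a model by the model theorem: a finite group $L_E$ with $N_P(E) \in \mathrm{Syl}_p(L_E)$ realizing $N_{\mathcal{F}}(E)$. Inside $L_E$, the subgroup $D_E := D \cap N_P(E)$ is strongly closed in $N_P(E)$ with respect to $L_E$, a fact inherited from the $\mathcal{F}$-strong closure of $D$ (any $L_E$-conjugate of an element of $D_E$ is $\mathcal{F}$-conjugate to an element of $D$ and hence lies in $D \cap N_P(E) = D_E$). The plan is then to verify the hypothesis of Lemma~\ref{1.1} for $(L_E, D_E)$, conclude that $L_E$ is $p$-nilpotent, and thereby deduce $\mathrm{Aut}_{\mathcal{F}}(E) = \mathrm{Aut}_{N_P(E)}(E)$, contradicting the essential status of $E$.

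The main obstacle is translating the global hypothesis $N_{\mathcal{F}}(Z(J(D))) = \mathcal{F}_P(P)$ into the local $p$-nilpotency of $N_{L_E}(Z(J(D_E)))$. I expect this to require a Glauberman-type replacement, comparing $Z(J(D_E))$ with an appropriate $\mathcal{F}$-translate of (a subgroup containing) $Z$, combined with the inductive hypothesis applied to intermediate normalizer subsystems $N_{\mathcal{F}}(R)$ for suitable $R$ strictly between $E$ and $P$. Getting this step to go through cleanly---without looping back to the very statement being proved---is the delicate point, and is where the strong closure of $D$, the characteristic nature of $Z(J(\cdot))$, and the hypothesis on $N_{\mathcal{F}}(Z(J(D)))$ must all conspire.
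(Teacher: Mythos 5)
Your forward direction is fine, and your overall strategy (show $Z(J(D))$ is normal in $\mathcal{F}$ by controlling $\mathrm{Aut}_{\mathcal{F}}(E)$ at each essential subgroup $E$) is a legitimate template, but the argument as written has a genuine hole exactly where you flag it, and it is not a small one. To invoke Lemma~\ref{1.1} in the model $L_E$ you must produce the $p$-nilpotency of $N_{L_E}(Z(J(D_E)))$ with $D_E = D\cap N_P(E)$, whereas your hypothesis concerns $N_{\mathcal{F}}(Z(J(D)))$; since $Z(J(D\cap N_P(E)))$ and $Z(J(D))$ are in general incomparable subgroups, there is no direct implication, and the ``Glauberman-type replacement'' you gesture at is precisely the hard content of such theorems in the group case. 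You also need, and do not establish, that $Z(J(D))\le E$ for every essential $E$: essential subgroups are centric, which gives $Z(P)\le E$ but says nothing about $Z(J(D))$. So the proposal is a plan with its decisive step missing rather than a proof.

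The paper sidesteps this local analysis entirely. It takes a minimal counterexample with respect to the number of morphisms and shows in sequence: every proper subsystem over $P$ is nilpotent; $O_p(\mathcal{F})\neq 1$ (by choosing $U$ with $N_{\mathcal{F}}(U)$ non-nilpotent and $|N_P(U)|$ maximal and showing $U$ is normal in $\mathcal{F}$); $PC_{\mathcal{F}}(O_p(\mathcal{F}))=\mathcal{F}_P(P)$ via the quotient $\mathcal{F}/O_p(\mathcal{F})$; and $O_p(\mathcal{F})$ is $\mathcal{F}$-centric via the Onofrei--Stancu generation lemma. Hence $\mathcal{F}$ is constrained, and a single application of the model theorem produces one group $G$ realizing all of $\mathcal{F}$, to which Lemma~\ref{1.1} applies verbatim with the same $D$. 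If you want to salvage your approach, the cleanest fix is to redirect your induction toward showing that a minimal counterexample is constrained, rather than trying to prove $p$-nilpotency of each local model $L_E$ separately.
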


The other main theorem of this paper states as follows:

\begin{theoremAlph}\label{B}
Let $\mathcal {F}$ be a saturated fusion system over a finite $p$-group $P$ and $p$ a prime. Then $\mathcal {F}=\mathcal {F}_{P}(P)$ if and only if $N_{\mathcal {F}}(P) =\mathcal {F}_{P}(P)$ and $\Phi(P)$ is an $\mathcal{F}$-strongly closed subgroup.
\end{theoremAlph}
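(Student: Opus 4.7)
The forward direction is immediate: if $\mathcal{F}=\mathcal{F}_{P}(P)$, every $\mathcal{F}$-morphism is a restriction of an inner automorphism of $P$, so $N_{\mathcal{F}}(P)=\mathcal{F}_{P}(P)$ trivially, and the characteristic subgroup $\Phi(P)$ is preserved by every inner automorphism and hence is $\mathcal{F}$-strongly closed.

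For the converse, the plan is to apply Alperin's fusion theorem and reduce to showing that $\mathcal{F}$ admits no proper $\mathcal{F}$-essential subgroup, since the hypothesis $N_{\mathcal{F}}(P)=\mathcal{F}_{P}(P)$ takes care of the term $Q=P$ in Alperin's generating set. The starting point is the quotient fusion system. Because $\Phi(P)$ is $\mathcal{F}$-strongly closed, $\mathcal{F}/\Phi(P)$ is a saturated fusion system over the elementary abelian group $P/\Phi(P)$. Since $P/\Phi(P)$ is abelian, any proper subgroup $\bar{Q}\lneq P/\Phi(P)$ satisfies $C_{P/\Phi(P)}(\bar{Q})=P/\Phi(P)\nleq\bar{Q}$ and so is not $\mathcal{F}/\Phi(P)$-centric; hence by Alperin the quotient is generated by $\mathrm{Aut}_{\mathcal{F}/\Phi(P)}(P/\Phi(P))$, which is the image of $\mathrm{Aut}_{\mathcal{F}}(P)=\mathrm{Inn}(P)$ and acts trivially on the abelian quotient. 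Therefore $\mathcal{F}/\Phi(P)=\mathcal{F}_{P/\Phi(P)}(P/\Phi(P))$, and every morphism $\phi\in\mathrm{Hom}_{\mathcal{F}}(R,S)$ satisfies $\phi(r)r^{-1}\in\Phi(P)$ for all $r\in R$.

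Suppose for contradiction that $Q<P$ is $\mathcal{F}$-essential. Then $\mathrm{Out}_{\mathcal{F}}(Q)$ admits a strongly $p$-embedded subgroup and is in particular not a $p$-group. For any $\phi\in\mathrm{Aut}_{\mathcal{F}}(Q)$, the previous paragraph gives $[\phi,Q]\le Q\cap\Phi(P)$, and strong closure of $\Phi(P)$ guarantees that $\phi$ preserves $Q\cap\Phi(P)$. The goal is to show additionally that $\phi$ fixes $Q\cap\Phi(P)$ pointwise: combined with its trivial action on $Q/(Q\cap\Phi(P))$, this places $\phi$ in the stability group of the chain $1\le Q\cap\Phi(P)\le Q$, which is a $p$-group by the classical stability argument for $p$-groups. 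Consequently $\mathrm{Aut}_{\mathcal{F}}(Q)$ would be a $p$-group, contradicting the existence of a strongly $p$-embedded subgroup in $\mathrm{Out}_{\mathcal{F}}(Q)$.

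The main technical difficulty is thus establishing that $\phi$ fixes $Q\cap\Phi(P)$ pointwise. The approach I have in mind is to exploit the extension axiom for saturated fusion systems: the restriction $\phi|_{Q\cap\Phi(P)}$ lies in $\mathrm{Aut}_{\mathcal{F}}(Q\cap\Phi(P))$, and by extending it iteratively along $\mathcal{F}$-morphisms one should reach an $\mathcal{F}$-automorphism of $P$, which by hypothesis is inner. Tracking this extension together with the already-established fact that every $\mathcal{F}$-morphism is trivial modulo $\Phi(P)$ is intended to force the restriction to be the identity; an induction on $|P|$, after ensuring the inductive hypothesis applies to suitable proper subsystems such as $\mathcal{F}|_{\Phi(P)}$, may also be needed at this step. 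Once this is in hand, the Alperin reduction closes the proof.
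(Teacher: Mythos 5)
Your forward direction is fine, and so is the first half of your converse: since $\Phi(P)$ is $\mathcal{F}$-strongly closed, $\mathcal{F}/\Phi(P)$ is a saturated fusion system on the elementary abelian group $P/\Phi(P)$, no proper subgroup of an abelian $p$-group is centric, and $\mathrm{Aut}_{\mathcal{F}/\Phi(P)}(P/\Phi(P))$ is induced by $\mathrm{Aut}_{\mathcal{F}}(P)=\mathrm{Aut}_{N_\mathcal{F}(P)}(P)=\mathrm{Inn}(P)$, which acts trivially modulo $[P,P]\le\Phi(P)$. Hence every $\mathcal{F}$-morphism is trivial modulo $\Phi(P)$, i.e.\ the focal subgroup $\langle \phi(x)x^{-1}\rangle$ lies in $\Phi(P)$; this is precisely the identity $E^p_{\mathcal{F}}(P)=\Phi(P)=E^p_{N_\mathcal{F}(P)}(P)$ that the paper's remark establishes. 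At that point the correct move is to invoke the fusion-system version of Tate's transfer theorem, \cite[Corollary 1.2]{DGPS}, which says that this equality of (elementary) focal subgroups already forces $\mathcal{F}=\mathcal{F}_P(P)$. The implication ``$\mathrm{foc}(\mathcal{F})\le\Phi(P)$ implies nilpotency'' is a genuine control-of-transfer theorem, not a formal consequence of Alperin's fusion theorem, and your argument stalls exactly where that input is required.

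The gap is in your elimination of essential subgroups. Everything hinges on the claim that each $\phi\in\mathrm{Aut}_{\mathcal{F}}(Q)$ fixes $Q\cap\Phi(P)$ pointwise, and the mechanism you sketch for it does not work. First, the extension axiom only extends $\phi|_{Q\cap\Phi(P)}$ to its group $N_{\phi}$, and iterating this need not reach $P$; indeed, if every $\mathcal{F}$-automorphism of every subgroup extended to $P$, then $\mathcal{F}$ would be generated by $\mathrm{Aut}_{\mathcal{F}}(P)$ and there would be nothing to prove, so assuming such extensions exist begs the question. Second, even if $\phi|_{Q\cap\Phi(P)}$ did extend to some $c_g\in\mathrm{Aut}_{\mathcal{F}}(P)=\mathrm{Inn}(P)$, you would only learn that $\phi$ agrees with $c_g$ on $Q\cap\Phi(P)$, and $c_g$ need not centralize $Q\cap\Phi(P)$, so pointwise fixing does not follow. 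Third, the fallback induction via $\mathcal{F}|_{\Phi(P)}$ is unavailable: the full subcategory on subgroups of a strongly closed subgroup is not known to be saturated in general, and even granting saturation you have not verified the hypotheses of the theorem for it. Note finally that your pointwise-fixing claim kills all essential subgroups at once and is therefore essentially equivalent to the theorem itself; no purely local manipulation at a single $Q$ can deliver it. Replace the essential-subgroup analysis with the citation of \cite[Corollary 1.2]{DGPS} (or a proof of the relevant transfer statement) and your first paragraph completes the argument.
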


Since $\mathcal{F}_P(P)$ with $P\in {\rm Syl}_p(G)$ is a saturated fusion system, Corollary \ref{C} can be verified immediately.

\begin{corollaryAlph}\label{C}
Let $G$ be a finite group, $P\in{\rm Syl}_{p}(G)$ and $p$ a prime. Then $G$ is $p$-nilpotent if and only if $N_{G}(P)$ is $p$-nilpotent and $\Phi(P)$ is a strongly closed subgroup of $G$.
\end{corollaryAlph}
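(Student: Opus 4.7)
The plan is to apply Theorem~\ref{B} to the saturated fusion system $\mathcal{F}:=\mathcal{F}_P(G)$, which is saturated because $P\in{\rm Syl}_p(G)$, and to translate between the group-theoretic and fusion-theoretic vocabularies via three standard dictionary entries.

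The entries I would record are: (i) $G$ is $p$-nilpotent if and only if $\mathcal{F}_P(G)=\mathcal{F}_P(P)$, a well-known consequence of Frobenius's normal $p$-complement theorem together with the identification $\mathrm{Aut}_{\mathcal{F}_P(G)}(Q)=N_G(Q)/C_G(Q)$ for every $Q\le P$; (ii) $N_{\mathcal{F}_P(G)}(P)=\mathcal{F}_P(N_G(P))$, so applying (i) to the group $N_G(P)$ (in which $P$ is still Sylow) shows that $N_{\mathcal{F}_P(G)}(P)=\mathcal{F}_P(P)$ is equivalent to the $p$-nilpotency of $N_G(P)$; (iii) the condition $\Phi(P)^G\cap P\le\Phi(P)$ defining strong closure of $\Phi(P)$ in $P$ with respect to $G$ coincides with the condition that $\Phi(P)$ be $\mathcal{F}_P(G)$-strongly closed, as can be seen by a direct comparison of the two element-wise statements.

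With these entries in hand both directions fall out. For the forward implication, if $G$ is $p$-nilpotent then (i) gives $\mathcal{F}_P(G)=\mathcal{F}_P(P)$; (ii) then yields that $N_G(P)$ is $p$-nilpotent, while $\Phi(P)\unlhd P$ is automatically $\mathcal{F}_P(P)$-strongly closed and hence, by (iii), strongly closed in $P$ with respect to $G$. For the converse, the two hypotheses of Corollary~\ref{C} translate via (ii) and (iii) into precisely the hypotheses of Theorem~\ref{B} for $\mathcal{F}_P(G)$; Theorem~\ref{B} then supplies $\mathcal{F}_P(G)=\mathcal{F}_P(P)$, and (i) concludes the $p$-nilpotency of $G$.

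No substantive obstacle is anticipated: the corollary is a packaging of Theorem~\ref{B} via Sylow saturation. The only step demanding care is checking that the element-wise definition of $\mathcal{F}$-strong closure used throughout the paper genuinely matches the subgroup-theoretic notion $Q^G\cap P\le Q$; this will be established by unwinding the two definitions side by side.
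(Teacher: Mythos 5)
Your proposal is correct and is exactly the argument the paper has in mind: the paper dispatches Corollary~\ref{C} in one line by noting that $\mathcal{F}_P(G)$ is saturated and invoking Theorem~\ref{B}, and your three dictionary entries (Frobenius's criterion, $N_{\mathcal{F}_P(G)}(P)=\mathcal{F}_P(N_G(P))$, and the match between the two notions of strong closure) are precisely the routine translations being left implicit. No issues.
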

\section{Proof}

\begin{lemma}[{\cite[Theorem 3.7]{OS}}]\label{OS3.7}
Let $\mathcal{F}$ be a saturated fusion system over a finite $p$-group $P$ and $p$ a prime. Then $\mathcal{F}=\langle PC_\mathcal{F}(O_p(\mathcal{F})), N_\mathcal{F}(O_p(\mathcal{F})C_P(O_p(\mathcal{F})))\rangle$.
\end{lemma}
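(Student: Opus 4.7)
The plan is to apply Alperin's fusion theorem for saturated fusion systems, which guarantees that $\mathcal{F}$ is generated by the automorphism groups $\mathrm{Aut}_{\mathcal{F}}(R)$ as $R$ ranges over the $\mathcal{F}$-essential subgroups together with $R=P$. Writing $Q:=O_p(\mathcal{F})$, $C:=C_P(Q)$, and $\mathcal{F}_0:=\langle PC_{\mathcal{F}}(Q),N_{\mathcal{F}}(QC)\rangle$, the task reduces to placing each such $\mathrm{Aut}_{\mathcal{F}}(R)$ into $\mathcal{F}_0$.

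For $R=P$ the argument is immediate: $Q$ is $\mathcal{F}$-invariant and $C$ is characteristic in $P$, so every $\alpha\in\mathrm{Aut}_{\mathcal{F}}(P)$ fixes $QC$ setwise, giving $\mathrm{Aut}_{\mathcal{F}}(P)\subseteq N_{\mathcal{F}}(QC)$. For essential $R$, the subgroup is $\mathcal{F}$-centric, and hence $Q\leq R$ by the standard fact that $O_p(\mathcal{F})$ is contained in every $\mathcal{F}$-centric subgroup. I would then split according to whether $C\leq R$. When $C\leq R$ one has $C_R(Q)=C$, which is characteristic in $R$, so $\mathrm{Aut}_{\mathcal{F}}(R)$ normalizes both $Q$ and $C$, and hence $QC$; restricting automorphisms of $R$ to the subgroup $QC\leq R$ (this being permitted since $R$ is centric and morphisms extend along centric targets) places $\mathrm{Aut}_{\mathcal{F}}(R)$ inside $N_{\mathcal{F}}(QC)\subseteq\mathcal{F}_0$.

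The crux is the remaining case $C\not\leq R$. Here I would isolate $K:=C_{\mathrm{Aut}_{\mathcal{F}}(R)}(Q)$, the subgroup of automorphisms acting trivially on $Q$; each element of $K$ realizes an $\mathcal{F}$-morphism centralizing $Q$, so $K\subseteq C_{\mathcal{F}}(Q)\subseteq PC_{\mathcal{F}}(Q)$. What remains is to exhibit a complement $L\leq\mathrm{Aut}_{\mathcal{F}}(R)$, with $KL=\mathrm{Aut}_{\mathcal{F}}(R)$, lying in $N_{\mathcal{F}}(QC)$. A natural attack is a Frattini-type argument: using that $R$ is essential, $\mathrm{Out}_{\mathcal{F}}(R)$ has a strongly $p$-embedded subgroup, and combined with the saturation axioms one should be able to find a Sylow $p$-subgroup of $\mathrm{Aut}_{\mathcal{F}}(R)$ that stabilizes the characteristic subgroup $Q\cdot C_R(Q)$, then transport the stabilizing automorphisms to morphisms defined on $QC$ via an extension argument on a fully normalized $\mathcal{F}$-conjugate of $R$ for which $C_P(Q)$ does sit inside.

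I expect this last step to be the main obstacle: when $C\not\leq R$, automorphisms of $R$ need not leave $QC$ invariant in any obvious way, and one loses information about the part of $C$ sitting outside $R$. If a direct Frattini factorization proves elusive, a viable fallback is a minimal-counterexample argument: assume $\mathcal{F}_0\subsetneq\mathcal{F}$ is a counterexample with $|P|+|\mathrm{Mor}(\mathcal{F})|$ minimal, verify that $\mathcal{F}_0$ is itself saturated with $O_p(\mathcal{F}_0)\geq Q$, and derive a contradiction by comparing $N_{\mathcal{F}_0}(QC)$ against $N_{\mathcal{F}}(QC)$, which by construction coincide inside $\mathcal{F}_0$, to force every $\mathcal{F}$-essential outside $\mathcal{F}_0$ to violate the essentiality or the radicality of $Q$.
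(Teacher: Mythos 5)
The paper does not actually prove this lemma; it is imported verbatim from \cite[Theorem 3.7]{OS}, so your attempt has to be measured against the standard argument there. Your opening moves are the right ones: reduce via Alperin's fusion theorem to $\mathrm{Aut}_{\mathcal F}(T)$ for $T$ essential or $T=P$, note that every such automorphism preserves $Q=O_p(\mathcal F)$, and observe that $C_{\mathrm{Aut}_{\mathcal F}(T)}(Q)$ consists of morphisms of $PC_{\mathcal F}(Q)$. But the proof is not complete: the case $C_P(Q)\not\le T$, which you correctly single out as the crux, is left unresolved, and neither of the two strategies you sketch is the right tool. The strongly $p$-embedded structure of $\mathrm{Out}_{\mathcal F}(T)$ plays no role here, and the minimal-counterexample fallback would require knowing that $\langle PC_{\mathcal F}(Q),N_{\mathcal F}(QC_P(Q))\rangle$ is saturated with the right normal $p$-subgroup, which is essentially as hard as the statement itself.

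The missing idea is the extension (receptivity) axiom applied to $Q$ itself. Since $Q\unlhd P$, $Q$ is fully normalized, hence receptive. Given $\alpha\in\mathrm{Aut}_{\mathcal F}(T)$ with $Q\le T$ and $\alpha(Q)=Q$, set $\alpha_0=\alpha|_Q\in\mathrm{Aut}_{\mathcal F}(Q)$ and check that both $T$ and $R:=QC_P(Q)$ lie in $N_{\alpha_0}$: for $g\in T$ one has $\alpha_0\,c_g|_Q\,\alpha_0^{-1}=c_{\alpha(g)}|_Q\in\mathrm{Aut}_P(Q)$, while elements of $C_P(Q)$ qualify trivially. Hence $\alpha_0$ extends to a morphism $\mu$ defined on $\langle T,R\rangle$; since $\mu(C_P(Q))\le C_P(\mu(Q))=C_P(Q)$, one gets $\mu(R)=R$, so $\mu$ (and its restriction $\mu|_T$) is a morphism of $N_{\mathcal F}(R)$. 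Then $\alpha\circ(\mu|_T)^{-1}$ restricts to the identity on $Q$ and therefore lies in $PC_{\mathcal F}(Q)$, giving $\alpha=\bigl(\alpha\circ(\mu|_T)^{-1}\bigr)\circ\mu|_T\in\langle PC_{\mathcal F}(Q),N_{\mathcal F}(R)\rangle$. This single computation handles both of your cases uniformly and produces exactly the ``complement'' you were hunting for. Two smaller inaccuracies to repair as well: $O_p(\mathcal F)$ is contained in every \emph{essential} (i.e.\ centric and radical) subgroup, not in every centric subgroup; and $C_{\mathrm{Aut}_{\mathcal F}(T)}(Q)$ is not literally contained in $C_{\mathcal F}(Q)$, which is a system over $C_P(Q)$ and need not contain $T$ --- it is contained in $PC_{\mathcal F}(Q)$, which is all you need.
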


\begin{proof}[Proof of Theorem \ref{A}]
The necessity is clear. Suppose that the converse is not true, and let $\mathcal{F}$ be a minimal counterexample with respect to the number $|\mathcal{F}|$ of morphisms of $\mathcal{F}$. We will show that $\mathcal{F}$ is constrained by the following steps.
\begin{itemize}
\item[(1)] Any proper fusion subsystem $\mathcal{E}$ of $\mathcal{F}$ over $P$ is equal to $\mathcal{F}_P(P)$.

$\mathcal{F}_P(P)\subseteq N_\mathcal{E}(Z(J(D)))\subseteq N_\mathcal{F}(Z(J(D)))=\mathcal{F}_P(P)$ implies that $N_\mathcal{E}(Z(J(D)))=\mathcal{F}_P(P)$, and $D$ is strongly closed in $P$ with respect to $\mathcal{F}$ then with respect to $\mathcal{E}$. By the minimality of $\mathcal{F}$, $\mathcal{E}=\mathcal{F}_P(P)$.

\item[(2)] $O_p(\mathcal{F})\neq 1$.

Since $\mathcal{F}$ is saturated and not nilpotent, by Alperin's fusion theorem there is some fully $\mathcal{F}$-normalized subgroup $T$ of $P$ such that $N_\mathcal{F}(T)$ is not nilpotent. Therefore we can choose, among all nontrivial subgroups $U$ of $P$ with $N_\mathcal{F}(U)$ not nilpotent, one with $|N_P(U)|$ maximal.

It suffices to show that $U$ is normal in $\mathcal{F}$. Assume that $N_\mathcal{F}(U)$ is a proper subsystem of $\mathcal{F}$. Since $N_\mathcal{F}(U)$ is not nilpotent, we have $N_P(U)<P$ by (1). It follows that $$N_P(U)<N_P(N_P(U))\le N_P(J(N_P(U)))\le N_P(Z(J(N_P(U)))).$$
Further note that $Z(J(N_P(U)))>1$ since $U>1$. Now $N_\mathcal{F}(Z(J(N_P(U))))$ is nilpotent by the choice of $U$. Then $N_{N_\mathcal{F}(U)}(Z(J(N_P(U))))$ is also nilpotent as a subsystem. Since $N_P(U)$ is $N_\mathcal{F}(U)$-strongly closed and $N_\mathcal{F}(U)$ is not nilpotent by the choice of $U$, $N_\mathcal{F}(U)$ is a counterexample to the theorem. This contradicts the minimality of $\mathcal{F}$. Thus $U$ is normal in $\mathcal{F}$ and consequently $1<U\le O_p(\mathcal{F})$. Now set $Q=O_p(\mathcal{F})$.

\item[(3)] $PC_\mathcal{F}(Q)=\mathcal{F}_P(P)$.

Assume that $PC_\mathcal{F}(Q)\neq \mathcal{F}_P(P)$. Then $PC_\mathcal{F}(Q)=\mathcal{F}$ by (1).
It follows from \cite[Proposition 3.4]{KL} that $\mathcal{F}/Q$ is not nilpotent since $\mathcal{F}$ is not nilpotent. By the minimality of $\mathcal{F}$, $\mathcal{F}/Q$ is not a counterexample to the theorem. Since $P/Q$ is $\mathcal{F}/Q$-strongly closed, we have that $N_{\mathcal{F}/Q}(Z(J(P/Q)))$ is not nilpotent as $\mathcal{F}/Q$ is not nilpotent. Let $E$ be the preimage of $Z(J(P/Q))$ in $P$. Then $N_\mathcal{F}(E)$ is not nilpotent. But this forces $\mathcal{F}=N_\mathcal{F}(E)$. It is a contradiction since $E>Q=O_p(\mathcal{F})$.

\item[(4)] $Q$ is $\mathcal {F}$-centric.

Let $R=QC_{P}(Q)$. It follows that $Q\leq R \unlhd P$. If  $Q<R$, then $N_\mathcal{F}(R) = \mathcal{F}_P(P)$. By Lemma \ref{OS3.7} and (3), we have that $\mathcal{F}=\langle PC_\mathcal{F}(Q), N_\mathcal{F}(R)\rangle = \mathcal{F}_P(P)$, a contradiction. Thus  $Q=R$ and $C_{P}(Q)\leq Q$.
\end{itemize}

In view of (2) and (4), we obtain that $\mathcal{F}$ is constrained. By the model theorem (cf. \cite[I.4.9]{AKO} or \cite{BCGLO}), there exists a finite group $G$ such that $\mathcal{F}=\mathcal{F}_P(G)$, $P\in {\rm Syl}_p(G)$, $Q\unlhd G$ and $C_{G}(Q)\leq Q$. Hence $\mathcal{F}_P(P)=\mathcal{F}_P(N_G(Z(J(D))))$, where $D$ is strongly closed in $P$ with respect to $G$. By Lemma \ref{1.1}, $G$ is $p$-nilpotent, and therefore $\mathcal{F}=\mathcal{F}_P(G)=\mathcal{F}_P(P)$, which is the final contradiction.
\end{proof}

\begin{remark}
Theorem \ref{B} can be obtained by mimicking the proof of \cite[Main Theorem]{S} and we therefore omit the details. When dealing with $p=2$, we use the elementary focal subgroups $E_\mathcal{F}^p(P)$ and $E_{N_\mathcal{F}(P)}^p(P)$ instead of the focal subgroups $A_\mathcal{F}^p(P)$ and $A_{N_\mathcal{F}(P)}^p(P)$ as in \cite{S}. Then similar arguments yield $E_{N_\mathcal{F}(P)}^p(P)=\Phi(P)=E_\mathcal{F}^p(P)$ and it follows from \cite[Corollary 1.2]{DGPS} that $\mathcal{F}=\mathcal{F}_P(P)$.
\end{remark}
\section*{Acknowledgement}
The research of the work is supported in part by the Natural Science Foundation of China (No. 12071181). The authors are grateful to A. Glesser and M.Yasir K\i zmaz for many valuable suggestions that help the authors improve the manuscript.

\end{document}